\newtheorem{theorem}{Theorem}[section]
\newtheorem{lemma}[theorem]{Lemma}
\newtheorem{prop}{Proposition}[section]
\theoremstyle{definition}
\newtheorem{definition}{Definition}[section]
\DeclareMathOperator{\rk}{rk}
\DeclareMathOperator{\ad}{ad}
\DeclareMathOperator{\Ad}{Ad}
\DeclareMathOperator{\coad}{ad^{\ast}}
\DeclareMathOperator{\coAd}{Ad^{\ast}}
\DeclareMathOperator{\Gr}{Gr}
\DeclareMathOperator{\fg}{\mathfrak g}
\DeclareMathOperator{\fgd}{\mathfrak g^{\ast}}
\DeclareMathOperator{\fd}{\mathfrak d}
\DeclareMathOperator{\fb}{\mathfrak b}
\DeclareMathOperator{\fh}{\mathfrak h}
\DeclareMathOperator{\fp}{\mathfrak p}
\DeclareMathOperator{\fa}{\mathfrak a}
\DeclareMathOperator{\fad}{\mathfrak a^{\ast}}
\DeclareMathOperator{\fl}{\mathfrak l}
\title{\bf {Abelian Ideals and the Variety of Lagrangian Subalgebras}}
\author{Sam Evens and Yu Li}
\date{\vspace{-5ex}}
\begin{document}

\maketitle

\begin{abstract}
    For a semisimple algebraic group $G$ of adjoint type with Lie algebra $\fg$ over the complex numbers, we establish a bijection between the set of closed orbits of the group $G \ltimes \fgd$ acting on the variety of Lagrangian subalgebras of $\fg \ltimes \fgd$ and the set of abelian ideals of a fixed Borel subalgebra of $\fg$.  In particular, the number of such orbits equals $2^{\text{rk} \fg}$ by Peterson's theorem on abelian ideals.
\end{abstract}

\tableofcontents

\medskip

\section{Introduction}

\par In the 1990's, Dale Peterson proved the remarkable and surprising assertion that the number of abelian ideals of a Borel subalgebra of a semisimple Lie algebra is $2^l$, where $l$ is the rank of the Lie algebra.  Although Peterson never published his result, his argument was widely popularized by Kostant, who further developed the theory in \cite{Kos2}, and the theory was further developed by many others, including in \cite{CP0, CP, CP2, Pan0, PR, Sut}.   In this paper, we find a new connection between the theory of \textit{abelian ideals} and the \textit{variety of Lagrangian subalgebras}, which is a variety that arises naturally in Poisson geometry.

Throughout this paper, we work over the complex numbers $\mathbb C$.   Let $G$ be a semisimple algebraic group of adjoint type with Lie algebra $\fg$.  There is a bi-vector field $\pi_{st}$, known as the {\it standard Poisson bi-vector field}, on $G$, making it a {\it Poisson Lie group}.  Infinitesimal information of $(G, \pi_{st})$ near the identity element of the group $G$ is captured by the {\it Manin triple} $(\fg \oplus \fg, \fg_{\Delta}, \fg^{\ast}_{st})$ (c.f. \cite{EvLu, EvLu2}).  Here, the symbol $\fg_{\Delta}$ stands for the diagonal Lie subalgebra of $\fg \oplus \fg$ and $\fg^{\ast}_{st}$ is defined to be the Lie subalgebra $$\{(h+x, -h+y): h \in \fh, x \in \mathfrak n, y \in \mathfrak n^-\}$$ of $\fg \oplus \fg$, where $\fg = \mathfrak n \oplus \fh \oplus \mathfrak n^-$ is a triangular decomposition of $\fg$.  The Lie algebra $\fg \oplus \fg$ comes equipped with a symmetric nondegenerate invariant bilinear form $<~,~>$.  A Lie subalgebra $\fl$ of $\fg \oplus \fg$ of dimension $\dim \fg$ such that $<\fl, \fl> = 0$ is called a {\it Lagrangian subalgebra} of $\fg \oplus \fg$.  The variety $\mathcal L(\fg \oplus \fg)$ of Lagrangian subalgebras of $\fg \oplus \fg$ has been extensively studied in algebraic and Poisson geometry:

\begin{itemize}
    \item In \cite{EvLu, EvLu2}, the first author and Lu discussed a $(G \times G)$-action on $\mathcal L(\fg \oplus \fg)$ and described explicitly the $(G \times G)$-orbits and their closures in $\mathcal L(\fg \oplus \fg)$.  Orbits of many subgroups of $G \times G$, for example the diagonal subgroup $G_{\Delta}$, are also explicitly described.
    \item It follows directly from the fundamental work of De Concini and Procesi \cite{DCP} (see \cite{EvJ, EvLu2}) that there is an embedding of $G$ into $\mathcal L(\fg \oplus \fg)$ (as a single $(G \times G)$-orbit) so that the closure of the image of this embedding is isomorphic to the {\it wonderful compactification} of $G$.
    \item The variety $\mathcal L(\fg \oplus \fg)$ can be thought of as a `universal {\it Poisson homogeneous space}' for the Poisson Lie group $(G, \pi_{st})$ as well as its {\it Poisson dual group} (\cite{Dri2, EvLu, EvLu2}).
\end{itemize}

The zero bi-vector field on $G$ makes $G$ into a Poisson Lie group.  The corresponding Manin triple is $(\fg \ltimes \fgd, \fg, \fgd)$.  Following the construction from Section 2 of \cite{EvLu}, we consider the variety $\mathcal L = \mathcal L(\fg \ltimes \fgd)$, called the {\it variety of Lagrangian subalgebras} of $\fg \ltimes \fgd$.  We note that the construction of $\mathcal L(\fg \ltimes \fgd)$ is the analogue of the construction of $\mathcal L(\fg \oplus \fg)$ when we substitute the Manin triple $(\fg \ltimes \fgd, \fg, \fgd)$ in place of $(\fg \oplus \fg, \fg_{\Delta}, \fg^{\ast}_{st})$.  We view $\fgd$ as an additive algebraic group and let $D$ denote the semidirect product $G \ltimes \fgd$, viewed as an algebraic group.  There is an action of $D$ on $\mathcal L$ analogous to the $(G \times G)$-action on $\mathcal L(\fg \oplus \fg)$, and the purpose of this paper is to classify closed $D$-orbits in $\mathcal L$.

A Lie ideal $\fa$ of a Borel subalgebra of $\fg$ such that $[\fa, \fa] = 0$ is called an {\it abelian ideal} of the Borel subalgebra.  The main result of this paper is the following

\begin{theorem} \label{mainthm}
There is a bijection between the set of closed $D$-orbits in $\mathcal L$ and the set of abelian ideals of a fixed Borel subalgebra of $\fg$.
\end{theorem}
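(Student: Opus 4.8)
The plan is to make the variety $\mathcal L$ completely explicit, translate ``closed $D$-orbit'' into a condition on stabilizers, and then extract abelian ideals from root-space data. Set $\fd = \fg \ltimes \fgd$. The first step is to record a concrete parametrization of $\mathcal L$, in the spirit of Section~2 of \cite{EvLu}: since $\fgd$ is an isotropic abelian ideal of $\fd$ and the form pairs $\fg$ nondegenerately with $\fgd$, every Lagrangian subalgebra $\fl$ satisfies $\fl \cap \fgd = \operatorname{Ann}(\mathfrak m)$, where $\mathfrak m := p(\fl)$ and $p \colon \fd \to \fg$ is the projection; one checks $\mathfrak m$ is a subalgebra and $\fl = \{(x, \omega(x) + \xi) : x \in \mathfrak m,\ \xi \in \operatorname{Ann}(\mathfrak m)\}$ for a linear map $\omega \colon \mathfrak m \to \mathfrak m^{\ast}$, which isotropy forces to be skew ($\omega \in \wedge^2 \mathfrak m^{\ast}$) and which the subalgebra condition forces to be a Chevalley--Eilenberg $2$-cocycle on $\mathfrak m$ with trivial coefficients. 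This gives a bijection between $\mathcal L$ and pairs $(\mathfrak m, \omega)$, under which $g \in G$ acts by $(\mathfrak m, \omega) \mapsto (\Ad_g \mathfrak m, (\Ad_g)_{\ast} \omega)$ while $\nu \in \fgd$ acts by $(\mathfrak m, \omega) \mapsto (\mathfrak m, \omega - d(\nu|_{\mathfrak m}))$ with $d$ the Chevalley--Eilenberg differential; so the $\fgd$-orbit of $(\mathfrak m, \omega)$ changes only the class $[\omega] \in H^2(\mathfrak m)$.

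Next I would use that $\mathcal L$ is a closed subvariety of the Grassmannian $\Gr(\dim \fg, \fd)$ (both the isotropy condition $\langle \fl, \fl \rangle = 0$ and the subalgebra condition are closed), hence projective. A $D$-orbit is therefore closed iff it is complete, i.e.\ isomorphic to $D/Q$ with $Q$ parabolic in $D$; as $\fgd$ is the unipotent radical of $D$ and $D/\fgd = G$, such a $Q$ must contain $\fgd$ and have image a parabolic $P$ of $G$, so the orbit is $\cong G/P$ and $\fgd \subseteq \operatorname{Stab}_D(\fl)$. By the action formula, $\fgd$ fixes $(\mathfrak m, \omega)$ exactly when $d(\nu|_{\mathfrak m}) = 0$ for all $\nu$, i.e.\ when $[\mathfrak m, \mathfrak m] = 0$. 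Thus a closed orbit forces $\mathfrak m$ abelian; then $H^2(\mathfrak m) = \wedge^2 \mathfrak m^{\ast}$, the group $\fgd$ acts trivially, $D \cdot \fl = G \cdot \fl$, and $G \cdot \fl$ is closed iff $\operatorname{Stab}_G(\fl) = N_G(\mathfrak m) \cap \operatorname{Stab}_G(\omega)$ contains a Borel $B'$ of $G$.

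Then comes the root-theoretic heart. If $B' \subseteq \operatorname{Stab}_G(\fl)$, decompose $\mathfrak m$ along a maximal torus of $B'$: being a $B'$-stable abelian subalgebra forces $\mathfrak m$ to meet the Cartan trivially and to be a sum of positive root spaces closed under adding positive roots, i.e.\ $\mathfrak m$ is an abelian ideal of $\fb' := \operatorname{Lie} B'$; moreover every weight of $\wedge^2 \mathfrak m^{\ast}$ is a sum of two negative roots, hence nonzero, so the only $B'$-fixed vector is $0$ and $\omega = 0$. Hence the closed $D$-orbits are exactly the $G$-orbits of $\fl_{\fa} := \fa \oplus \operatorname{Ann}(\fa)$ as $\fa$ runs over abelian ideals of Borel subalgebras of $\fg$; conjugating the Borel, every such $\fa$ is $G$-conjugate to an abelian ideal of the fixed $\fb$, and conversely each $\fl_{\fa}$ has $D$-stabilizer $N_G(\fa) \ltimes \fgd$ with $N_G(\fa)$ parabolic, so $D \cdot \fl_{\fa} \cong G/N_G(\fa)$ is closed. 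For injectivity: if $\Ad_g \fl_{\fa} = \fl_{\fa'}$ for abelian ideals $\fa, \fa'$ of $\fb$, then $\Ad_g \fa = \fa'$; the normalizers $N_{\fg}(\fa)$ and $N_{\fg}(\fa')$ contain $\fb$, so are standard parabolics, and they are $G$-conjugate, hence equal (Borels inside a parabolic are conjugate there, and parabolics are self-normalizing), so $g$ lies in that parabolic $P = N_G(N_{\fg}(\fa))$, which normalizes $\fa$; therefore $\fa' = \fa$. Combined with Peterson's theorem this yields the bijection and the count $2^{\rk \fg}$.

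The hard part will not be the formal skeleton — the completeness reduction and the injectivity argument are short — but the first step, namely establishing the parametrization of $\mathcal L$ and, especially, the precise formula for the $\fgd$-action, carefully following \cite{EvLu}, together with the root-theoretic bookkeeping in the third step that excludes semisimple elements from $\mathfrak m$ and forces $\omega = 0$. I would also track where the adjoint hypothesis on $G$ is genuinely used (it underlies the Evens--Lu framework and keeps the relevant stabilizers parabolic).
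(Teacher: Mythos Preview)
Your proposal is correct and shares with the paper its essential architecture: the same parametrization of $\mathcal L$ by pairs $(\fa,f)$ (the paper cites Karolinsky--Stolin for this; your skew $2$-cocycle $\omega\in\wedge^2\mathfrak m^{\ast}$ is their skew $1$-cocycle $f\colon\fa\to\fad$ under the evident identification), the same $D$-action formulas, the same forward direction (abelian ideal $\Rightarrow$ orbit $\cong G/N_G(\fa)$ projective), the same root-space argument placing $\fa$ inside $\mathfrak n$, and the same injectivity via self-normalization of standard parabolics. The one genuine difference is how you extract the necessary conditions from closedness. You observe that a closed orbit has parabolic stabilizer in $D$, and since $\fgd$ is the unipotent radical of $D$ it lies in every Borel of $D$, hence in every parabolic; so $\fgd$ fixes $\fl$, which by the action formula forces $[\fa,\fa]=0$ immediately, and the induced parabolic $G$-stabilizer then yields $\fa\triangleleft\fb'$ and $\omega=0$ (no nonzero Borel-fixed vector in $\wedge^2\fa^{\ast}$, all weights being sums of two negative roots). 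The paper instead projects the orbit to $\Gr(\dim\fa,\fg)$, deduces that $N_G(\fa)$ is parabolic from completeness of the image, and then shows the fibre over $\fa$ is projective yet contained in an affine chart of $\Gr(n,\fd)$, hence finite; this forces all $f_{\alpha}=0$, whence $\fa$ is abelian, and a separate explicit $H$-scaling argument kills $f$. Your route is cleaner and more structural, bypassing the affine-chart fibre analysis; the paper's is more hands-on and avoids invoking the parabolic structure theory of the non-reductive group $D$.
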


We will make this bijection explicit after introducing some notation.  Along the way, we will see that the geometry of $\mathcal L$ is much harder to understand than that of $\mathcal L(\fg \oplus \fg)$.  The subgroup $G \cong G \times \{0\}$ of $D$ is an analogue of the subgroup $G_{\Delta}$ of $G \times G$.  We will see that a complete understanding of the orbits of $G$ in $\mathcal L$ contains as a subquestion the question of classifying all finite dimensional Lie algebras.  So our current situation is quite different than that of \cite{EvLu, EvLu2}.  It is for this reason that we restricted our attention to closed $D$-orbits in $\mathcal L$.  In a subsequent publication, we will study more general $D$-orbits and orbits of certain subgroups of $D$, using a degeneration of $\mathcal L (\fg \oplus \fg)$ into $\mathcal L$.

Peterson proved the remarkable result that the number of abelian ideals of a fixed Borel subalgebra of $\fg$ equals $2^{\rk \fg}$.  As a consequence of Theorem \ref{mainthm} and Peterson's result, we have the following

\begin{theorem}
The number of closed $D$-orbits in $\mathcal L$ is exactly $2^{\rk \fg}$.
\end{theorem}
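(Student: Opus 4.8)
The plan is to deduce the statement directly from Theorem~\ref{mainthm} together with Peterson's count of abelian ideals, so that very little is left to do. First I would fix, once and for all, a Borel subalgebra $\fb$ of $\fg$ containing a Cartan subalgebra $\fh$. Theorem~\ref{mainthm} furnishes a bijection between the set of abelian ideals of $\fb$ and the set of closed $D$-orbits in $\mathcal L$; in particular, once we know the former set is finite, so is the latter, and the two have the same cardinality. Hence it suffices to show that $\fb$ has exactly $2^{\rk \fg}$ abelian ideals.

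This last count is precisely Peterson's theorem, which is quoted in the excerpt and which I would take as an external input. Since Peterson's argument was never published, I would cite Kostant's exposition \cite{Kos2} and the further treatments in \cite{CP0, CP}; if a self-contained account is desired, one can reproduce the standard argument, which passes through the affine Weyl group $\widehat W$: the abelian ideals of $\fb$ are identified with a distinguished finite subset of $\widehat W$, and a short combinatorial argument on this subset shows that it has cardinality $2^{\rk \fg}$. Combining this with the reduction of the previous paragraph, the number of closed $D$-orbits in $\mathcal L$ equals $2^{\rk \fg}$, as claimed.

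One small point deserves a remark: Theorem~\ref{mainthm} is phrased in terms of ``a fixed Borel subalgebra'', so one should note that the number of abelian ideals of a Borel does not depend on the choice, since all Borel subalgebras of $\fg$ are conjugate under $G$; thus the quantity $2^{\rk \fg}$ is unambiguous. As for the main obstacle, there really is none at this stage: all of the difficulty has already been absorbed into Theorem~\ref{mainthm} --- whose proof, constructing the explicit correspondence between abelian ideals and closed $D$-orbits and verifying closedness in both directions, is the technical core of the paper --- and into Peterson's theorem, which is a known result we are free to invoke.
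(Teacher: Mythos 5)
Your proposal is correct and matches the paper's own argument exactly: the authors likewise deduce the count by combining the bijection of Theorem~\ref{mainthm} with Peterson's theorem that a fixed Borel subalgebra has $2^{\rk \fg}$ abelian ideals, citing \cite{Kos2} and related references. Your added remark on independence of the choice of Borel is a harmless and reasonable precaution.
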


Denote by $(G, 0)$ the Poisson Lie group $G$ equipped with the zero Poisson bi-vector field.  We note that the Poisson dual group of $(G, 0)$ is $(\fgd, \pi_{KK})$, where $\pi_{KK}$ stands for the {\it Kirillov-Kostant Poisson bi-vector field} on $\fgd$.  In our context, the Poisson Lie groups $(G,0)$ and $(\fgd, \pi_{KK})$ play a similar role as that of $(G, \pi_{st})$ and its Poisson dual group in \cite{EvLu, EvLu2}.  
 The theory of abelian ideals of Borel subalgebras has been initiated by Kostant in \cite{Kos2}, and it is a well developed theory by now, c.f. \cite{CP0, CP, CP2, Pan0, PR, Sut}.  However, the proofs of several key results of this theory are combinatorial in nature.  We hope that Poisson geometry may provide an alternative understanding of these results.

In the paper \cite{EvLu3}, a Poisson structure $\pi$ is introduced on $G$ for which the symplectic leaves are related to conjugacy classes and the \textit{Bruhat decomposition}.
In a rough sense, it is reasonable to regard $(\fgd, \pi_{KK})$, resp. $\mathcal L$, as an additive analogue of $(G, \pi)$, resp. $\mathcal L(\fg \oplus \fg)$.  Since many important geometric objects associated to $G$ can be realized as closed subvarieties of $\mathcal L(\fg \oplus \fg)$, c.f. \cite{EvJ, EvLu, EvLu2}, one can define these objects in the additive setting as well.  For example, one can study the geometry of the `wonderful compactification' of $\fgd$, the closure of a single $D$-orbit in $\mathcal L$, and of a Cartan subalgebra of $\fg$.  The latter is a counterpart of the wonderful compactification of a Cartan subgroup of $G$ and potentially leads to a theory of \textit{additive toric varieties}.  These topics will be discussed in a separate paper.

This paper is organized as follows.  In Section 2 we introduce notation and define various basic objects.  In Section 3, we construct the variety $\mathcal L$ of Lagrangian subalgebras of $\fg \ltimes \fgd$.  We recall the algebraic parametrization of points of $\mathcal L$ due to Karolinsky and Stolin \cite{KS}.  In Section 4 we prove Theorem \ref{mainthm}.  Our proof is based on the Karolinsky-Stolin parametrization.

{\bf Acknowledgements.}  The authors are grateful to Victor Ginzburg and Jiang-Hua Lu for many stimulating discussions.   We also thank Eugene Karolinsky for useful discussions.   The first author was supported in part by the Simons Foundation Travel Grant 359424.

\medskip

\section{Notation and Preliminaries}

Let $G$ be a semisimple algebraic group of adjoint type with Lie algebra $\fg$.  By $\fgd$ we mean the dual space of $\fg$.  In what follows, $\fgd$ will also be viewed as an algebraic group and an abelian Lie algebra, where the group operation is the usual addition in $\fgd$.  The adjoint (resp. coadjoint) action of $G$ on $\fg$ (resp. $\fgd$) will be denoted by $\Ad$ (resp. $\coAd)$.  The adjoint (resp. coadjoint) action of $\fg$ on $\fg$ (resp. $\fgd$) will be denoted by $\ad$ (resp. $\coad)$.  Elements of $\fg$ will be denoted by Roman letters $x, y, \dots$ and elements of $\fgd$ will be denoted by Greek letters $\alpha, \beta, \dots$.

We consider the semidirect product algebraic group $D := G \ltimes \fgd$, which has the following structure.  As a set, $D$ is the Cartesian product $G \times \fgd$.  Multiplication and inversion in $D$ are given by
\begin{equation} \label{gp}
    \begin{aligned}
    (g, \alpha)(g', \alpha') := & (gg', \Ad^{\ast}_{(g')^{-1}} \alpha + \alpha') ~ && \text{for all} ~ g, g' \in G ~ \text{and} ~ \alpha, \alpha' \in \fgd \\
    (g, \alpha)^{-1} := & (g^{-1}, - \Ad^{\ast}_g \alpha) ~ && \text{for all} ~ g \in G ~ \text{and} ~ \alpha \in \fgd.
    \end{aligned}
\end{equation}
The identity element of $D$ is $(e,0)$, where $e$ stands for the identity element of $G$.  It is easy to verify that the homomorphism of algebraic groups $G \rightarrow D$ (resp. $\fgd \rightarrow D$) given by $g \mapsto (g, 0)$ (resp. $\alpha \mapsto (e, \alpha)$) is an embedding of $G$ (resp. $\fgd$) into $D$.  In what follows, we will view $G$ (resp. $\fgd$) as an algebraic subgroup of $D$ via this homomorphism.

Consider the Lie algebra $\fd$ of $D$, which is the semidirect sum Lie algebra $\fd = \fg \ltimes \fgd$.  As a vector space, it is the Cartesian product $\fg \times \fgd$.  The Lie bracket in $\fd$ is given by
\begin{align*}
    [(x, \alpha), (y, \beta)] := ([x,y], \ad^{\ast}_x \beta - \ad^{\ast}_y \alpha) ~ && \text{for all} ~ x, y \in \fg ~ \text{and} ~ \alpha, \beta \in \fgd.
\end{align*}
As above, $\fg$ (resp. $\fgd$) will be viewed as a Lie subalgebra of $\fd$ via the homomorphism $\fg \rightarrow \fd$ (resp. $\fgd \rightarrow \fd$) given by $x \mapsto (x, 0)$ (resp. $\alpha \mapsto (0, \alpha)$).  The exponential map $\text{Exp}: \fd \rightarrow D$ is given by
\begin{align} \label{exp}
    (x, \alpha) \mapsto (\exp(x), \alpha),
\end{align}
where $\exp: \fg \rightarrow G$ stands for the exponential map for $G$. 

By abuse of notation, the adjoint action of $D$ on $\fd$ will also be denoted by $\Ad$.  Using formulas (\ref{gp}) and (\ref{exp}), one easily verifies that
\begin{align*}
    \Ad_{(g, \alpha)} (x, \beta) = (\Ad_g x, - \Ad^{\ast}_g \ad^{\ast}_x \alpha + \Ad^{\ast}_g \beta) ~ && \text{for all} ~ g \in G, x \in \fg ~ \text{and} ~ \alpha, \beta \in \fgd.
\end{align*}
In particular, for $g \in G, x \in \fg$ and $\alpha, \beta \in \fgd$, we have
\begin{align} \label{Ads}
    \begin{aligned}
    \Ad_g (x, \beta) = (Ad_g x, \Ad^{\ast}_g\beta) \\
    \Ad_{\alpha} (x, \beta) = (x, - \ad^{\ast}_x \alpha + \beta).
    \end{aligned}
\end{align}

Define a bilinear form $(~,~): \fd \otimes \fd \rightarrow \mathbb C$ by $$((x, \alpha), (y, \beta)) := \alpha (y) + \beta (x),$$ for all $x, y \in \fg$ and $\alpha, \beta \in \fgd$.  It is easily verified that $(~,~)$ is symmetric, nondegenerate and $D$-invariant.  Here, $D$-invariance means
\begin{align} \label{inv}
    (\Ad_{(g, x)} (y, \beta), \Ad_{(g, x)} (z, \gamma)) = ((y, \beta), (z, \gamma)),
\end{align}
for all $g \in G$, $x,y,z \in \fg$ and $\beta, \gamma \in \fgd$.

Recall that for a vector space $V$ equipped with a symmetric nondegenerate bilinear form $(~,~)$, a vector subspace $W$ of $V$ is called {\it isotropic} if $(w, w') = 0$ for all $w, w' \in W$.  An isotropic subspace $W$ of $V$ is called {\it Lagrangian} if $W$ is maximal, with respect to inclusion, among all isotropic subspaces of $V$.  When $V$ is even dimensional, an isotropic subspace $W$ of $V$ is Lagrangian if and only if $\dim W = \frac 12 \dim V$.

A Lie subalgebra $\fl$ of $\fd$ is called a {\it Lagrangian subalgebra} if $\fl$ is a Lagrangian vector subspace of $\fd$ with repsect to the bilinear form on $\fd$ introduced above.  Although we will not use it in this paper, it is worth pointing out that $(\fd, \fg, \fgd)$, together with the bilinear form $(~,~)$ on $\fd$, is a Manin triple.  This amounts to saying that $\fg$ and $\fgd$ are Lagrangian subalgebras of $\fd$ and $\fd$ is the {\it direct} sum of $\fg$ and $\fgd$ as vector spaces, c.f. \cite{Dri2, EvLu, EvLu2} for details.

\medskip

\section{The Variety of Lagrangian Subalgebras}



Let $\fa$ be a Lie subalgebra of $\fg$ and $\fad$ its dual space.  We write $Z^1_{CE}(\fa, \fad)$ for the vector space of {\it Chevalley-Eilenberg $1$-cocycles} on $\fa$ with coefficients in $\fad$, where $\fa$ acts on $\fad$ by the coadjoint action.  Specifically, $Z^1_{CE}(\fa, \fad)$ consists of linear maps $f: \fa \rightarrow \fad$ such that
\begin{align} \label{ce}
    \ad^{\ast}_x f(y) - \ad^{\ast}_y f(x) - f([x,y]) = 0 ~ && \text{for all} ~ x,y \in \fa.
\end{align}
We say that $f \in Z^1_{CE}(\fa, \fad)$ is {\it skew} if 
\begin{align} \label{skew}
    f(x)(y) + f(y)(x) = 0 ~ && \text{for all} ~ x,y \in \fa.
\end{align}

\begin{definition} \label{lg}
Let $\fa$ be a Lie subalgebra of $\fg$ and $f \in Z^1_{CE}(\fa, \fad)$ a skew element.  We define
\begin{align*}
    \fl(\fa, f) := \{(x, \alpha) \in \fd: x \in \fa, f(x) = \alpha|_{\fa}\},
\end{align*}
where $\alpha|_{\fa}$ stands for the restriction of $\alpha$ to $\fa$.
\end{definition}

Let $\fa$ and $f$ be as in Definition \ref{lg}.  It follows from (\ref{ce}) (resp. (\ref{skew})) that $\fl(\fa, f)$ is a Lie subalgebra (resp. isotropic vector subspace) of $\fd$.  It is clear that $\dim \fl(\fa, f) = \frac 12 \dim \fd$.  Therefore, $\fl(\fa, f)$ is a Lagrangian subalgebra of $\fd$.

The following result of Karolinsky and Stolin establishes a bijection between points of $\mathcal L$ and pairs $(\fa, f)$ as in Definition \ref{lg}.

\begin{theorem} \cite{KS}
The Lagrangian subalgebras of $\fd$ are exactly the Lagrangian subalgebras $\fl(\fa, f)$, for $(\fa, f)$ as in Definition \ref{lg}.
\end{theorem}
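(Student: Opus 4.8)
The plan is to show the two statements that together establish the theorem: first, that every $\fl(\fa,f)$ is a Lagrangian subalgebra (which, as the excerpt already remarks, follows directly from the cocycle and skew conditions); and second, the substantive direction, that every Lagrangian subalgebra $\fl$ of $\fd$ arises this way. So fix a Lagrangian subalgebra $\fl \subseteq \fd = \fg \ltimes \fgd$. Let $p: \fd \to \fg$ be the projection $(x,\alpha) \mapsto x$, and set $\fa := p(\fl)$. The first key step is to check that $\fa$ is a Lie subalgebra of $\fg$: this is immediate because $p$ is a Lie algebra homomorphism (the bracket in $\fd$ projects to the bracket in $\fg$), so the image of the subalgebra $\fl$ is a subalgebra.

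Next I would analyze the kernel of $p|_{\fl}$, i.e.\ $\fl \cap \fgd = \{(0,\alpha) \in \fl\}$; call the set of such $\alpha$ the subspace $V \subseteq \fgd$. The crucial observation is that $V = \fa^{\perp}$, the annihilator of $\fa$ in $\fgd$. One inclusion: if $(0,\alpha) \in \fl$ and $(x,\beta) \in \fl$ with $x \in \fa$ arbitrary, then isotropy of $\fl$ under the form $((x,\alpha),(y,\beta)) = \alpha(y) + \beta(x)$ forces $\alpha(x) = 0$, so $\alpha \in \fa^{\perp}$. For the reverse inclusion I use the dimension count: from the exact sequence $0 \to V \to \fl \to \fa \to 0$ we get $\dim \fl = \dim \fa + \dim V$, and since $\fl$ is Lagrangian, $\dim \fl = \dim \fg = \tfrac12 \dim \fd$; combined with $\dim \fa + \dim \fa^{\perp} = \dim \fg$ and $V \subseteq \fa^{\perp}$, this forces $V = \fa^{\perp}$. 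Now for each $x \in \fa$, pick any $(x,\alpha) \in \fl$; two choices differ by an element of $\fgd$ annihilating $\fa$, so the restriction $\alpha|_{\fa}$ is well-defined independent of the lift. Define $f: \fa \to \fad$ by $f(x) = \alpha|_{\fa}$. This $f$ is linear, and by construction $\fl \subseteq \fl(\fa,f)$; since both are Lagrangian (equal dimension), $\fl = \fl(\fa,f)$.

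It remains to verify that $f$ so defined is a skew Chevalley–Eilenberg $1$-cocycle. Skewness (\ref{skew}) is exactly the isotropy of $\fl$: for $x,y \in \fa$ with lifts $(x,\alpha),(y,\beta) \in \fl$, isotropy gives $\alpha(y) + \beta(x) = 0$, i.e.\ $f(x)(y) + f(y)(x) = 0$. The cocycle condition (\ref{ce}) comes from $\fl$ being closed under bracket: computing $[(x,\alpha),(y,\beta)] = ([x,y], \ad^{\ast}_x\beta - \ad^{\ast}_y\alpha)$ and noting this element of $\fl$ is a lift of $[x,y] \in \fa$, we get $f([x,y]) = (\ad^{\ast}_x\beta - \ad^{\ast}_y\alpha)|_{\fa} = (\ad^{\ast}_x f(y) - \ad^{\ast}_y f(x))|_{\fa}$, using that coadjoint action by an element of $\fa$ preserves $\fad$-restrictions compatibly. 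The one point requiring a little care here is checking that $\ad^{\ast}_x$ of an extension of $f(y)$ restricts correctly — i.e.\ that $(\ad^{\ast}_x \beta)|_{\fa}$ depends only on $\beta|_{\fa}$ when $x \in \fa$ — which holds because $\ad_x$ preserves $\fa$, so the difference of two extensions (annihilating $\fa$) stays annihilating $\fa$ after applying $\ad^{\ast}_x$.

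The main obstacle, and the place where one must be most careful, is the well-definedness circle: showing $\fl \cap \fgd = \fa^{\perp}$ exactly (not merely $\subseteq$), since this is what guarantees $f$ is a genuine function on $\fa$ rather than a multivalued relation, and it is what pins down $\dim \fl(\fa,f) = \dim\fg$ so that the inclusion $\fl \subseteq \fl(\fa,f)$ upgrades to equality. Everything else is a direct translation of "subalgebra" and "isotropic" into the defining conditions (\ref{ce}) and (\ref{skew}).
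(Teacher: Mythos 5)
Your proof is correct and complete: the projection-to-$\fg$ argument, the identification $\fl\cap\fgd=\fa^{\perp}$ via isotropy plus a dimension count, and the translation of isotropy and closure under bracket into the skew and cocycle conditions are exactly the standard argument for this parametrization. Note that the paper itself does not prove this statement but cites Karolinsky--Stolin \cite{KS} for it, so there is no in-paper proof to compare against; your argument is essentially the one in that reference.
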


Let $(\fa, f)$ and $(\fa', f')$ be as in Definition \ref{lg}.  It is easily verified that $\fl(\fa, f) = \fl(\fa', f')$ if and only if $\fa = \fa'$ and $f = f'$.

Let $\Gr(n, \fd)$ be the {\it Grassmannian} of $n$-dimensional vector subspaces of $\fd$, where $n := \dim \fg = \frac 12 \dim \fd$.  For an $n$-dimensional vector subspace of $\fd$, the property of being a Lie subalgebra, resp. being an isotropic vector subspace, of $\fd$ is a closed condition.  Therefore, the set of Lagrangian subalgebras of $\fd$ has a natural structure of a reduced closed subvariety of $\Gr(n, \fd)$, to be denoted by $\mathcal L$.  Since $\Gr(n, \fd)$ is a projective variety, the variety $\mathcal L$ is projective as well.  We will call $\mathcal L$ the {\it variety of Lagrangian subalgebras} of $\fd$.


Let $S$ be a subset of $\fd$.  For any $d \in D$, we write $\Ad_d S$ for the set $\{\Ad_d s: s \in S\}$.  Since the $D$-action on $\fd$ preserves the symmetric form $(~,~)$, it follows that if $\fl$ is a Lagrangian subalgebra of $\fd$, then so is $\Ad_d \fl$ for any $d \in D$.  Therefore, we obtain an action, still denoted by $\Ad$, of $D$ on $\mathcal L$.

We deduce some formulas for the action of $D$ on $\mathcal L$.  Let $\fa$ be a Lie subalgebra of $\fg$ and $f \in Z^1_{CE}(\fa, \fad)$ a skew element.  For $g \in G$, we define a linear map $g.f: \Ad_g \fa \rightarrow (\Ad_g \fa)^{\ast}$ by $$(g.f)(x)(y) := f(\Ad_{g^{-1}} x)(\Ad_{g^{-1}} y)$$ for all $x,y \in \Ad_g \fa$.  A simple computation shows that $g.f$ is a skew element of $Z^1_{CE}(\Ad_g \fa, (\Ad_g \fa)^{\ast})$.  For $\alpha \in \fgd$, we define a linear map $f_{\alpha}: \fa \rightarrow \fad$ by $$f_{\alpha}(x) := - \ad^{\ast}_x (\alpha|_{\fa})$$ for all $x \in \fa$.  Another simple computation shows that $f_{\alpha}$ is a skew element of $Z^1_{CE}(\fa, \fad)$.

\begin{lemma} \label{halfact}
Let $\fa$ be a Lie subalgebra of $\fg$ and $f \in Z^1_{CE}(\fa, \fad)$ a skew element.  Then,
\begin{enumerate}
    \item For any $g \in G$, we have $$\Ad_g \fl(\fa, f) = \fl(\Ad_g \fa, g.f).$$
    \item For any $\alpha \in \fgd$, we have $$\Ad_{\alpha} \fl(\fa, f) = \fl(\fa, f+f_{\alpha}).$$
\end{enumerate}
\end{lemma}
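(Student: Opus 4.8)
The plan is to verify both identities by a direct substitution of the explicit formulas (\ref{Ads}) for $\Ad_g$ and $\Ad_{\alpha}$ into Definition \ref{lg}. First I would record a reduction that makes the verification painless: in each part the right-hand side is already known to be a Lagrangian subalgebra of $\fd$ --- indeed $g.f$ and $f + f_{\alpha}$ are skew $1$-cocycles (the former by the computation noted just before this lemma, the latter because the skew elements form a linear subspace of $Z^1_{CE}(\fa,\fad)$ and $f_{\alpha}$ is one of them), so the discussion following Definition \ref{lg} applies --- while the left-hand side is the image of a Lagrangian subalgebra under the linear automorphism $\Ad_d$ of $\fd$, hence again Lagrangian, of dimension $\tfrac12\dim\fd$. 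Consequently it suffices to prove one inclusion in each part.

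For part (1), I would take an arbitrary $(x,\beta)\in\fl(\fa,f)$, so that $x\in\fa$ and $f(x)=\beta|_{\fa}$, and apply (\ref{Ads}) to get $\Ad_g(x,\beta)=(\Ad_g x,\Ad^{\ast}_g\beta)$ with $\Ad_g x\in\Ad_g\fa$. The only point to check is that $(g.f)(\Ad_g x)=(\Ad^{\ast}_g\beta)|_{\Ad_g\fa}$; evaluating both sides on $z=\Ad_g y$ with $y\in\fa$, the left side equals $f(x)(y)$ by the definition of $g.f$ and the right side equals $\beta(y)$, and these agree since $f(x)=\beta|_{\fa}$. Hence $\Ad_g\fl(\fa,f)\subseteq\fl(\Ad_g\fa,g.f)$, and equality follows from the dimension count above (or, alternatively, by running the same argument with $g^{-1}$).

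For part (2), I would again start from $(x,\beta)\in\fl(\fa,f)$ and use (\ref{Ads}) to write $\Ad_{\alpha}(x,\beta)=(x,\beta-\ad^{\ast}_x\alpha)$, noting $x\in\fa$. Here the decisive observation is that restriction to the \emph{subalgebra} $\fa$ is compatible with the coadjoint action: for $y\in\fa$ one has $[x,y]\in\fa$, so $(\ad^{\ast}_x\alpha)(y)$ depends only on $\alpha|_{\fa}$, giving $(\ad^{\ast}_x\alpha)|_{\fa}=\ad^{\ast}_x(\alpha|_{\fa})=-f_{\alpha}(x)$. Therefore $(\beta-\ad^{\ast}_x\alpha)|_{\fa}=\beta|_{\fa}+f_{\alpha}(x)=f(x)+f_{\alpha}(x)=(f+f_{\alpha})(x)$, which is precisely the defining condition for $\Ad_{\alpha}(x,\beta)$ to lie in $\fl(\fa,f+f_{\alpha})$; equality of the two Lagrangian subalgebras then follows as before.

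As for difficulty, I do not anticipate a genuine obstacle: both statements are essentially bookkeeping. The only places that ask for a moment's care are the sign conventions in the coadjoint action and the two elementary facts used implicitly --- that $\Ad_g$ restricts to a linear isomorphism from $\fa$ onto $\Ad_g\fa$, and that for $x\in\fa$ the operator $\ad^{\ast}_x$ commutes with restriction of functionals to $\fa$ --- both immediate from $\fa$ being a Lie subalgebra of $\fg$.
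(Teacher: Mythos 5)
Your proof is correct and follows essentially the same route as the paper's: one inclusion verified by substituting the formulas (\ref{Ads}) into Definition \ref{lg}, with equality by the dimension count. The only difference is that you write out part (2) explicitly (including the useful observation that $(\ad^{\ast}_x\alpha)|_{\fa}=\ad^{\ast}_x(\alpha|_{\fa})$ for $x\in\fa$), whereas the paper dismisses it as analogous.
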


\begin{proof}
We only prove the first statement.  The proof of the second statement is analogous.

Note that $\Ad_g \fl(\fa, f)$ and $\fl(\Ad_g \fa, g.f)$ are $n$-dimensional vector subspaces of $\fd$, hence it suffices to show that $\Ad_g \fl(\fa, f) \subseteq \fl(\Ad_g \fa, g.f)$.  Let $(x, \beta) \in \fl(\fa, f)$.  Then we have $$\Ad_g (x, \beta) = (\Ad_g x, \Ad^{\ast}_g \beta)$$ by formula (\ref{Ads}).  For any $y \in \fa$, we have $$(g.f)(\Ad_g x)(\Ad_g y) = f(x)(y) = \beta (y) = (\Ad^{\ast}_g \beta)(\Ad_g y).$$  Hence, we have $$(g.f)(\Ad_g x) = (\Ad^{\ast}_g \beta)|_{\Ad_g \fa},$$ i.e., $(\Ad_g x, \Ad^{\ast}_g \beta) \in \fl(\Ad_g \fa, g.f)$, proving the desired inclusion.
\end{proof}


{\bf Remark.} Let $\fa$ be a Lie subalgebra of $\fg$.  For any element $g \in G$, by Lemma \ref{halfact}, we have $$\Ad_g \fl(\fa, 0) = \fl(\Ad_g \fa, 0).$$  Thus the $G$-orbit through $\fl(\fa, 0)$ is $$\{\fl(\Ad_g \fa, 0): g \in G\}.$$  It follows from this and the observation that $\fl(\fa, f) = \fl(\fa', f')$ if and only if $\fa = \fa'$ and $f = f'$, that, for any Lie subalgebras $\fa, \fa'$ of $\fg$, the Lagrangian subalgebras $\fl(\fa, 0)$ and $\fl(\fa', 0)$ are in the same $G$-orbit if and only if $\fa$ and $\fa'$ are conjugate by an element of $G$.  Consequently, if we would like to classify all $G$-orbits in $\mathcal L$, we have to first classify Lie subalgebras of $\fg$ up to conjugation by elements of $G$.  By Ado's theorem, every finite dimensional Lie algebra is isomorphic to a Lie subalgebra of $\mathfrak {gl}_N$ for some $N \in \mathbb N$.  So, if we were able to classify $G$-orbits in $\mathcal L$ in the case where $G$ is of type $A$, we would (more or less) be able to classify all finite dimensional Lie algebras.  This shows that any classification of $G$-orbits in $\mathcal L$ is likely very complicated.   In contrast, the classification of $G_{\Delta}$-orbits in 
$\mathcal L(\fg \oplus \fg)$ is given in Theorem 3.7 of \cite{EvLu2}.
\medskip

\section{Closed Orbits}


Below, we use the notation $N_G(\fa)$ (resp. $N_{\fg}(\fa)$) for the normalizer in $G$ (resp. $\fg$) of a Lie subalgebra $\fa$ of $\fg$.

We first prove

\begin{prop} \label{1}
Let $\fa$ be an abelian ideal of a Borel subalgebra $\fb$ of $\fg$.  Then the $D$-orbit $\Ad_D \fl(\fa, 0)$ through $\fl(\fa, 0)$ is closed.
\end{prop}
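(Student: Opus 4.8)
The plan is to reduce the closedness of the $D$-orbit to the completeness of a partial flag variety. The first step is to exploit the hypothesis that $\fa$ is abelian in order to see that the normal subgroup $\fgd$ of $D$ acts trivially on $\fl(\fa,0)$. Concretely, for $\alpha \in \fgd$ and $x,y \in \fa$ we have $f_{\alpha}(x)(y) = -\ad^{\ast}_x(\alpha|_{\fa})(y) = \alpha([x,y])$, which vanishes since $[\fa,\fa] = 0$; hence $f_{\alpha} = 0$ and Lemma \ref{halfact}(2) gives $\Ad_{\alpha}\fl(\fa,0) = \fl(\fa,0)$ for every $\alpha \in \fgd$. Combining this with Lemma \ref{halfact}(1), every element of $D$ acts on $\fl(\fa,0)$ through its image in $G$, so $\Ad_D \fl(\fa,0) = \Ad_G \fl(\fa,0) = \{\fl(\Ad_g \fa, 0) : g \in G\}$, and it suffices to prove that this $G$-orbit is closed in $\mathcal L$.

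Next I would identify the $G$-stabilizer of $\fl(\fa,0)$. Since $\fl(\fa,f) = \fl(\fa',f')$ forces $\fa = \fa'$, Lemma \ref{halfact}(1) shows that the stabilizer of $\fl(\fa,0)$ in $G$ equals $N_G(\fa)$. Here the hypothesis that $\fa$ is an ideal of $\fb$ enters: $[\fb,\fa] \subseteq \fa$ gives $\fb \subseteq N_{\fg}(\fa)$, and since $N_{\fg}(\fa)$ is the Lie algebra of the closed subgroup $N_G(\fa)$ and $B$ is connected, it follows that $B \subseteq N_G(\fa)$. A closed subgroup of $G$ containing a Borel subgroup is parabolic, so $P := N_G(\fa)$ is parabolic, and the orbit map induces a bijective morphism $G/P \to \Ad_G \fl(\fa,0)$, $gP \mapsto \fl(\Ad_g\fa,0)$.

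Finally I would conclude by the standard completeness argument: $G/P$ is a complete variety, the morphism $G/P \to \mathcal L$, $gP \mapsto \fl(\Ad_g \fa, 0)$, has image $\Ad_G \fl(\fa,0) = \Ad_D \fl(\fa,0)$, and the image of a complete variety under a morphism to a variety is closed; hence $\Ad_D \fl(\fa,0)$ is closed in $\mathcal L$. I do not expect a genuine obstacle in this argument: the only two points that use the hypotheses essentially are the vanishing $f_{\alpha} = 0$ (which uses that $\fa$ is abelian, and suggests why the statement would fail for a general subalgebra) and the identification of $N_G(\fa)$ as a parabolic subgroup (which uses that $\fa$ is an ideal of $\fb$). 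The genuinely hard part lies elsewhere in Section 4, namely the converse direction needed for Theorem \ref{mainthm} — that every closed $D$-orbit is of this form and that distinct abelian ideals of a fixed Borel subalgebra give distinct $D$-orbits.
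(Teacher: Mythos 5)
Your argument is correct and follows essentially the same route as the paper: show $f_{\alpha}=0$ using $[\fa,\fa]=0$ so that $\fgd$ fixes $\fl(\fa,0)$, identify the orbit with $G/N_G(\fa)$, and use that $N_G(\fa)$ is parabolic (since it contains $B$) to conclude completeness and hence closedness. No substantive differences from the paper's proof.
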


\begin{proof}
For any $\alpha \in \fgd$, by Lemma \ref{halfact}, we have $$\Ad_{\alpha} \fl(\fa, 0) = \fl(\fa, f_{\alpha}).$$  Since $\fa$ is an abelian Lie algebra, for any $x,y \in \fa$, we have $$f_{\alpha}(x)(y) = (- \ad^{\ast}_x (\alpha|_{\fa}))(y) = \alpha([x,y]) = \alpha (0) = 0.$$  Thus, we see that $f_{\alpha} = 0$, so that $$\Ad_{\alpha} \fl(\fa, 0) = \fl(\fa, 0).$$  It follows from (\ref{gp}) that, for any $(g, \alpha) \in D$, we have $(g, \alpha) = (g, 0)(e, \alpha)$, so $$\Ad_{(g, \alpha)} \fl(\fa, 0) = \Ad_g \Ad_{\alpha} \fl(\fa, 0) = \Ad_g \fl(\fa, 0) = \fl(\Ad_g \fa, 0),$$ again by Lemma \ref{halfact}.  Since $\fl(\Ad_g \fa, 0) = \fl(\fa, 0)$ if and only if $\Ad_g \fa = \fa$, i.e., $g \in N_G(\fa)$, the $D$-orbit $\Ad_D \fl(\fa, 0)$ is isomorphic to $G/N_G(\fa)$ as a variety.  Since $\fa$ is an ideal of $\fb$, the normalizer $N_G(\fa)$ contains the Borel subgroup of $G$ corresponding to $\fb$.  Hence $N_G(\fa)$ is a parabolic subgroup of $G$ and, therefore, $G/N_G(\fa)$ is a projective variety.  It follows that the orbit $\Ad_D \fl(\fa, 0)$ is closed.
\end{proof}

For the converse we need a simple lemma.  Let $\fg = \mathfrak n \oplus \fh \oplus \mathfrak n^-$ be a triangular decomposition of $\fg$.  Write $\Phi = \Phi^+ \sqcup \Phi^-$ for the root system of $(\fg, \fh)$ so that roots in $\Phi^+$ correspond to $\mathfrak n$ and roots in $\Phi^-$ correspond to $\mathfrak n^-$.  For $\lambda \in \Phi$, the $\lambda$-root space is denoted by $\fg_{\lambda}$.  Let $\fp$ be a parabolic subalgebra of $\fg$ containing $\mathfrak n \oplus \fh$ and $\mathfrak i$ a Lie ideal of $\fp$.  Define $\mathfrak i_0$ to be $\mathfrak i \cap \fh$.  Since $\mathfrak i$ is an ideal of $\fp$, $\fp$ acts on $\mathfrak i$ by the adjoint action.  In particular, the Lie subalgebra $\fh$ of $\fp$ acts on $\mathfrak i$ by the adjoint action.  Hence $\mathfrak i$ decomposes into a direct sum of $\fh$-weight spaces.  Since an $\fh$-weight on $\fg$ is either $0$ or a root, we have proved the following

\begin{lemma} \label{rt}
With the above notation, we have $$\mathfrak i = \mathfrak i_0 \oplus \bigoplus \limits_{\lambda \in \Lambda} \fg_{\lambda},$$ where $\Lambda$ is some subset of $\Phi$.
\end{lemma}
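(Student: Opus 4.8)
The plan is to exploit the containment $\fh \subseteq \fp$, which forces the ideal $\mathfrak i$ to be stable under the adjoint action of the Cartan subalgebra $\fh$. First I would note that, since $\mathfrak i$ is an ideal of $\fp$ and $\fh$ is a Lie subalgebra of $\fp$, we have $[\fh, \mathfrak i] \subseteq \mathfrak i$; in other words, $\mathfrak i$ is a submodule of $\fg$ for the adjoint action of $\fh$.

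Next I would invoke the root space decomposition $\fg = \fh \oplus \bigoplus_{\lambda \in \Phi} \fg_{\lambda}$, which is precisely the decomposition of $\fg$ into $\fh$-weight spaces under the adjoint action, with zero weight space equal to $\fh$ itself (here the semisimplicity of $\fg$ enters, through the fact that $\fh$ is self-centralizing). Since $\fh$ is abelian and acts diagonalizably on $\fg$, every $\fh$-submodule $M$ of $\fg$ is graded by weights, i.e. $M = (M \cap \fh) \oplus \bigoplus_{\lambda \in \Phi}(M \cap \fg_{\lambda})$. Taking $M = \mathfrak i$ gives $\mathfrak i = \mathfrak i_0 \oplus \bigoplus_{\lambda \in \Phi}(\mathfrak i \cap \fg_{\lambda})$, since $\mathfrak i_0 = \mathfrak i \cap \fh$ by definition.

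Finally, because each root space $\fg_{\lambda}$ is one-dimensional, the intersection $\mathfrak i \cap \fg_{\lambda}$ is either $0$ or all of $\fg_{\lambda}$; putting $\Lambda := \{\lambda \in \Phi : \fg_{\lambda} \subseteq \mathfrak i\}$ then yields the asserted identity $\mathfrak i = \mathfrak i_0 \oplus \bigoplus_{\lambda \in \Lambda} \fg_{\lambda}$. I do not anticipate any genuine obstacle here: the only point that deserves care is the standard structural input — that the zero weight space of $\fg$ under $\ad \fh$ is exactly $\fh$, and that a submodule for a diagonalizable abelian action decomposes along weight spaces — and, as the discussion immediately preceding the statement already makes clear, the lemma is essentially a formal consequence of the observation that $\fp$, hence $\fh$, acts on $\mathfrak i$.
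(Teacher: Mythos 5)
Your argument is correct and is essentially the same as the paper's: the paper likewise observes that $\fh \subseteq \fp$ acts on the ideal $\mathfrak i$, decomposes $\mathfrak i$ into $\fh$-weight spaces, and uses that the nonzero weights of $\fg$ are roots. You merely spell out the standard details (one-dimensionality of root spaces, weight-grading of submodules) that the paper leaves implicit.
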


Now we are ready to prove

\begin{prop} \label{2}
Let $\fa$ be a Lie subalgebra of $\fg$ and $f \in Z^1_{CE}(\fa, \fad)$ a skew element.  If the $D$-orbit $\Ad_D \fl (\fa, f)$ is closed, then $\fa$ is an abelian ideal of some Borel subalgebra of $\fg$ and $f = 0$.
\end{prop}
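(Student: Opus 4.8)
The plan is to combine completeness of the closed orbit with the Borel fixed point theorem, and then to read off both conclusions from invariance under a Borel subgroup of $D$. Since $\mathcal L$ is projective and $\Ad_D\fl(\fa,f)$ is closed in it, this orbit is a complete variety. Fix a triangular decomposition $\fg = \mathfrak n \oplus \fh \oplus \mathfrak n^-$ and let $B \subseteq G$ be the Borel subgroup with Lie algebra $\fb_0 := \fh \oplus \mathfrak n$; then $B \ltimes \fgd$ is a connected solvable (in fact Borel) subgroup of $D$, and by the Borel fixed point theorem it fixes some point of the complete variety $\Ad_D\fl(\fa,f)$. Write that point as $\fl(\fa',f')$ with $(\fa',f')$ as in Definition \ref{lg}. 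I will show $\fa'$ is an abelian ideal of $\fb_0$ and $f'=0$; this suffices, because if $\fl(\fa',0) = \Ad_d \fl(\fa,f)$ with $d^{-1} = (g,\beta) \in D$, then by (\ref{gp}), Lemma \ref{halfact}, and the fact that $f_\beta = 0$ once $\fa'$ is abelian, one gets $\fl(\fa,f) = \Ad_g\Ad_\beta\fl(\fa',0) = \fl(\Ad_g\fa',0)$, whence $f = 0$ and $\fa = \Ad_g\fa'$ is an abelian ideal of the Borel subalgebra $\Ad_g\fb_0$.

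To analyze $\fl(\fa',f')$: invariance under $\fgd \subseteq B \ltimes \fgd$ and Lemma \ref{halfact}(2) force $f_\alpha = 0$ for all $\alpha \in \fgd$, and since $f_\alpha(x)(y) = \alpha([x,y])$ this gives $[\fa',\fa'] = 0$, so $\fa'$ is abelian. Invariance under $B$ and Lemma \ref{halfact}(1) force $\Ad_g\fa' = \fa'$ for all $g \in B$, hence $\fb_0 \subseteq N_{\fg}(\fa') =: \fp$, so $\fp$ is a parabolic subalgebra containing $\fb_0$ and $\fa'$ is an abelian ideal of $\fp$. The main step is $\fa' \subseteq \mathfrak n$. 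Since $\fa'$ is a subalgebra it lies in $\fp$; write $\fp = \mathfrak m \oplus \mathfrak u$ with $\mathfrak m$ the standard Levi subalgebra (reductive, containing $\fh$) and $\mathfrak u$ the nilpotent radical ($\subseteq \mathfrak n$). By Lemma \ref{rt} applied to the ideal $\fa'$ of $\fp$, $\fa' = (\fa' \cap \fh) \oplus \bigoplus_{\lambda \in \Lambda} \fg_\lambda$ with each $\fg_\lambda \subseteq \fp$. First $\fa' \cap \fh = 0$: given $h$ in it, for every $\mu \in \Phi^+$ we have $\fg_\mu \subseteq \fb_0 \subseteq \fp$, hence $[\fg_\mu, h] \subseteq \fa'$, and this bracket equals $\fg_\mu$ if $\mu(h) \neq 0$; but then $\mu \in \Lambda$ and abelianness of $\fa'$ forces $\mu(h) = 0$, a contradiction, so $\mu(h) = 0$ for all $\mu \in \Phi^+$ and hence $h = 0$. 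Next, the image of $\fa'$ under $\fp \to \fp/\mathfrak u \cong \mathfrak m$ is an abelian ideal of $\mathfrak m$ which is a sum of root spaces, hence lies in the semisimple part $[\mathfrak m,\mathfrak m]$, hence is zero; so $\Lambda$ contains no root of $\mathfrak m$. Therefore each $\lambda \in \Lambda$ is a root of $\mathfrak u$, in particular positive, so $\fa' \subseteq \mathfrak u \subseteq \mathfrak n \subseteq \fb_0$; as $\fb_0 \subseteq N_{\fg}(\fa')$, this exhibits $\fa'$ as an abelian ideal of the Borel subalgebra $\fb_0$.

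It remains to get $f' = 0$. Invariance of $\fl(\fa',f')$ under $B$ also yields $g.f' = f'$ for all $g \in B$, in particular for $g = t$ in the maximal torus $T \subseteq B$ with Lie algebra $\fh$. For $x \in \fg_\lambda$ and $y \in \fg_\mu$ with $\lambda,\mu \in \Lambda \subseteq \Phi^+$, the definition of $g.f'$ gives $f'(x)(y) = f'(\Ad_{t^{-1}}x)(\Ad_{t^{-1}}y) = \lambda(t)^{-1}\mu(t)^{-1} f'(x)(y)$ for all $t \in T$; since $\lambda + \mu \neq 0$, this forces $f'(x)(y) = 0$. As $\fa'$ is the sum of these $\fg_\lambda$, we conclude $f' = 0$, and the proof is finished by the reduction above.

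I expect the genuine content to be the structural step in the second paragraph — that an abelian ideal of a parabolic subalgebra containing a fixed Borel must lie inside the nilpotent radical of that Borel — which rests on Lemma \ref{rt} together with the Levi decomposition of the parabolic; the completeness plus Borel fixed point reduction is soft, and the vanishing of $f'$ is a one-line weight computation.
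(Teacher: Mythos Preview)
Your proof is correct and takes a genuinely different route from the paper's. The paper argues directly with the given pair $(\fa,f)$: it builds a projection $p:\Ad_D\fl(\fa,f)\to \Gr(k,\fg)$ whose image is $G/N_G(\fa)$, uses completeness of the orbit to force $N_G(\fa)$ parabolic, and then shows the fiber of $p$ over $\fa$ sits inside an affine chart of the Grassmannian while also being projective, hence is finite; finiteness kills the $1$-parameter families $\{\fl(\fa,f+f_{s\alpha})\}$ and $\{\fl(\fa,\exp(sh).f)\}$, yielding first that $\fa$ is abelian and then that $f=0$. You instead invoke the Borel fixed point theorem for the connected solvable subgroup $B\ltimes\fgd$ acting on the complete orbit, obtain a fixed point $\fl(\fa',f')$, read off abelianness of $\fa'$ and $B$-stability immediately from Lemma~\ref{halfact}, and only afterwards transport the conclusion back to $(\fa,f)$. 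Your approach is shorter and more conceptual, replacing the ``projective subset of an affine chart is finite'' step by a one-line appeal to Borel fixed points; the paper's approach, on the other hand, yields along the way the concrete geometric picture of the orbit as fibered over the partial flag variety $G/N_G(\fa)$ with affine fibers. Both arguments finish with essentially the same torus-weight computation to force the cocycle to vanish, and both rely on Lemma~\ref{rt}; your treatment of the inclusion $\fa'\subseteq\mathfrak n$ via the Levi decomposition (abelian ideal of the reductive Levi factor meets only root spaces, hence lies in the semisimple part, hence is zero) is a clean variant of the paper's $\mathfrak{sl}_2$-triple contradiction and subsequent citation of \cite{Kos2}.
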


\begin{proof}
Write $k$ for the dimension of $\fa$.  Consider the first projection map $\text{pr}_1: \fd \rightarrow \fg$ sending $(x, \alpha)$ to $x$.  For any $(g, \alpha) \in D$, by Lemma \ref{halfact} and formula (\ref{gp}), we have
\begin{align*}
    \Ad_{(g, \alpha)} \fl(\fa, f) = \Ad_g \Ad_{\alpha} \fl(\fa, f) = \Ad_g \fl(\fa, f+f_{\alpha}) = \fl(\Ad_g \fa, g.(f+f_{\alpha})).
\end{align*}
It follows that $\text{pr}_1 (\Ad_{(g, \alpha)} \fl(\fa, f)) = \Ad_g \fa$.  Let $\Gr(k, \fg)$ be the Grassmannian of $k$-dimensional vector subspaces of $\fg$.  From the computation above, we see that $\text{pr}_1$ induces a morphism of varieties $p: \Ad_D \fl (\fa, f) \rightarrow \Gr(k, \fg)$, sending $\Ad_{(g, \alpha)} \fl(\fa, f)$ to $\Ad_g \fa$.

By hypothesis, the orbit $\Ad_D \fl(\fa, f)$ is closed in the projective variety $\mathcal L$.  Hence $\Ad_D \fl(\fa, f)$ is itself a projective variety.  In particular, $\Ad_D \fl(\fa, f)$ is complete.  So, the image $p(\Ad_D \fl(\fa, f))$ of $\Ad_D \fl(\fa, f)$ must also be complete.  But, by the analysis of the previous paragraph, the image $p(\Ad_D \fl(\fa, f))$ is isomorphic as a variety to $G/N_G(\fa)$.  Hence $N_G(\fa)$ is a parabolic subgroup of $G$.

The fiber of $p$ over $\fa \in \Gr(k, \fg)$ is easily seen to be
\begin{align*}
    \{\fl(\fa, g.(f+f_{\alpha})): g \in N_G(\fa), \alpha \in \fgd\}.
\end{align*}
Write $f'$ for $g.(f+f_{\alpha})$ ($g \in N_G(\fa), \alpha \in \fgd$).  Choose a basis $\{v_1, \cdots, v_k\}$ of $\fa$ and extend it to a basis $\{v_1, \cdots, v_k, v_{k+1}, \cdots, v_n\}$ of $\fg$.  Let $\{v_1^{\ast}, \cdots, v_n^{\ast}\}$ be the dual basis of $\fgd$.  For all $1 \le i \le k$, we write $$f'(v_i) = \sum \limits_{j=1}^k a_{ij} (v_j^{\ast}|_{\fa}),$$ for some $a_{ij} \in \mathbb C$.  Hence, by definition, we see that $\fl(\fa, g.(f+f_{\alpha}))$ is spanned by $(v_1, \sum \limits_{j=1}^k a_{1j} v_j^{\ast})$, ... , $(v_k, \sum \limits_{j=1}^k a_{kj} v_j^{\ast})$, $(0, v_{k+1}^{\ast})$, ... , $(0, v_n^{\ast})$.  Recall that, for any basis $\{w_1, \cdots, w_{2n}\}$ of $\fd$, the set $$\{\text{Span} (w_1 + \sum \limits_{j=n+1}^{2n} b_{1j} w_j, \cdots, w_n + \sum \limits_{j=n+1}^{2n} b_{nj} w_j): b_{ij} \in \mathbb C\}$$ is an affine chart on $\Gr(n, \fd)$.  We consider the affine open set given by the basis $\{w_1, \dots, w_{2n}\}$ with  $$\{w_1, \cdots, w_n\} := \{v_1, \cdots, v_k, v^{\ast}_{k+1}, \cdots, v^{\ast}_n\}, ~ \{w_{n+1}, \cdots, w_{2n}\} := \{v_{k+1}, \cdots, v_n, v^{\ast}_1, \cdots, v^{\ast}_k\},$$ and note that the fiber of $p$ over $\fa$ is contained in the corresponding affine open set.  But this fiber is closed in $\mathcal L$, hence it is a projective variety.  Therefore, the fiber of $p$ over $\fa$ must be a finite set.  In particular,
\begin{align*}
    \{\fl(\fa, f+f_{\alpha}): \alpha \in \fgd\}
\end{align*}
is a finite set.  Suppose $f_{\alpha} \neq 0$ for some $\alpha \in \fgd$.  Then we have $f \neq f + f_{\alpha}$, so $\fl(\fa, f) \neq \fl(\fa, f + f_{\alpha})$.  We deduce that the subset $\{\fl(\fa, f+f_{s\alpha}): s \in \mathbb C\}$ is a curve connecting the distinct points $\fl(\fa, f)$ and $\fl(\fa, f+f_{\alpha})$, a contradiction.  Therefore, we have $f_{\alpha} = 0$ for all $\alpha \in \fgd$.  Then, by the definition of $f_{\alpha}$, we have $\alpha ([x,y]) = 0$ for all $\alpha \in \fgd$ and $x,y \in \fa$.  It follows that $[x,y] = 0$ for all $x,y \in \fa$, i.e., $\fa$ is an abelian Lie algebra.

Since $N_G(\fa)$ is a parabolic subgroup of $G$, we see that $N_{\fg}(\fa)$ is a parabolic Lie subalgebra of $\fg$ which contains $\fa$ as a Lie ideal.  Choose a triangular decomposition $\fg = \mathfrak n \oplus \fh \oplus \mathfrak n^-$ of $\fg$ such that $\mathfrak n \oplus \fh \subseteq N_{\fg}(\fa)$.  Then, by Lemma \ref{rt}, we have $$\fa = \fa_0 \oplus \bigoplus \limits_{\lambda \in \Lambda} \fg_{\lambda}$$ for some subset $\Lambda$ of the set $\Phi = \Phi^+ \sqcup \Phi^-$ of roots for $(\fg, \fh)$.  Suppose that there exists $\lambda \in \Phi^+$ such that $-\lambda \in \Lambda$.  Choose an $\mathfrak {sl}_2$-triple $\{e_{\lambda}, f_{\lambda}, h_{\lambda}\}$ such that $e_{\lambda} \in \fg_{\lambda}$, $f_{\lambda} \in \fg_{-\lambda} \subseteq \fa$ and $h_{\lambda} \in \fh$.  Since $\fa$ is a Lie ideal of $N_{\fg}(\fa)$ and $\mathfrak n \subseteq N_{\fg}(\fa)$, we have $h_{\lambda} = [e_{\lambda}, f_{\lambda}] \in \fa$.  But then $[h_{\lambda}, f_{\lambda}] = -2 f_{\lambda} \neq 0$, contradicting the fact that $\fa$ is an abelian Lie algebra.  From this it follows that $\fa \subseteq \mathfrak n \oplus \fh$.  Since $\fa$ is a Lie ideal of $N_{\fg}(\fa)$ and $\mathfrak n \oplus \fh \subseteq N_{\fg}(\fa)$, we conclude that $\fa$ is a Lie ideal of the Borel subalgebra $\mathfrak n \oplus \fh$ of $\fg$.   It is now well-known and very easy to prove that $\fa \subseteq \mathfrak n$ \cite{Kos2}.

Let $H$ be the Cartan subgroup of $G$ corresponding to $\fh$.  It is clear that $H$ normalizes $\fa$, hence is contained in $N_G(\fa)$.  Therefore, the subset
\begin{align*}
    \{\fl(\fa, t.f): t \in H\}
\end{align*}
of the fiber of the morphism $p$ over $\fa$ must be discrete.   For any $\lambda, \mu \in \Phi^+$ such that $\fg_{\lambda}, \fg_{\mu} \subseteq \fa$, choose nonzero elements $e_{\lambda} \in \fg_{\lambda}$ and $e_{\mu} \in \fg_{\mu}$.  Then we have
\begin{align*}
    (\exp(s.h).f)(e_{\lambda})(e_{\mu}) = e^{-s(\lambda(h) + \mu(h))} f(e_{\lambda})(e_{\mu})
\end{align*}
for all $s \in \mathbb C$ and $h \in \fh$.  From this we see that, for any $h \in \fh$ with $(\lambda + \mu)(h) \neq 0$, the subset $\{\fl(\fa, \exp(s.h).f): s\in \mathbb C\}$ is a curve connecting distinct points unless $f=0$.  Thus $f = 0$, as desired.
\end{proof}

From Proposition \ref{1} and Proposition \ref{2} we obtain

\begin{theorem} \label{lastthm}
Let $\fa$ be a Lie subalgebra of $\fg$ and $f \in Z^1_{CE}(\fa, \fad)$ a skew element.  Then the $D$-orbit $\Ad_D \fl (\fa, f)$ is closed if and only if $\fa$ is an abelian ideal of some Borel subalgebra of $\fg$ and $f = 0$.
\end{theorem}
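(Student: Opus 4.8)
The plan is to assemble Theorem \ref{lastthm} directly from the two propositions already proved, and there is essentially nothing left to do beyond citing them. Concretely, I would write: the ``if'' direction is exactly Proposition \ref{1}, which says that for $\fa$ an abelian ideal of a Borel subalgebra and $f = 0$, the orbit $\Ad_D \fl(\fa, 0)$ is closed; and the ``only if'' direction is exactly Proposition \ref{2}, which says that closedness of $\Ad_D \fl(\fa, f)$ forces $\fa$ to be an abelian ideal of some Borel subalgebra and $f = 0$. So the proof is two sentences, one invoking each proposition, and I would simply say ``This follows immediately by combining Proposition \ref{1} and Proposition \ref{2}.''

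If I instead imagine proving the statement from scratch (in case the propositions were not available), the natural route is the one the paper takes. First I would use the Karolinsky–Stolin parametrization and Lemma \ref{halfact} to compute the $D$-orbit of $\fl(\fa, f)$ explicitly: the $\fgd$-part of the action changes $f$ by $f_\alpha(x) = -\ad^\ast_x(\alpha|_\fa)$, and the $G$-part conjugates $\fa$ and transports $f$. For the ``if'' direction, I would observe that when $\fa$ is abelian the cocycles $f_\alpha$ all vanish, so the $\fgd$-factor acts trivially and the orbit is just $\{\fl(\Ad_g \fa, 0)\}$, which is $G/N_G(\fa)$; since $\fa$ is an ideal of a Borel, $N_G(\fa)$ is parabolic, $G/N_G(\fa)$ is projective, and the orbit is therefore closed in $\mathcal L$.

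For the ``only if'' direction I would argue by contradiction through the structure of the orbit. Projecting the orbit to $\Gr(k,\fg)$ via $\text{pr}_1$ gives $G/N_G(\fa)$ as image; completeness of the (closed, hence projective) orbit forces this image to be complete, so $N_G(\fa)$ is parabolic. Then each fiber of the projection is closed, hence a projective variety, but it sits inside a single affine chart of the Grassmannian, so it must be finite; this kills the one-parameter families $s \mapsto \fl(\fa, f + f_{s\alpha})$ and $s \mapsto \fl(\fa, \exp(s.h).f)$, forcing $f_\alpha \equiv 0$ (hence $\fa$ abelian, since $\alpha([x,y]) = 0$ for all $\alpha$) and, after locating $\fa$ inside $\mathfrak n \oplus \fh$ via Lemma \ref{rt} and an $\mathfrak{sl}_2$-triple argument, forcing $f = 0$ as well. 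The main obstacle in this from-scratch version is the fiber-finiteness argument: one has to choose the basis of $\fg$ adapted to $\fa$, identify the correct affine chart of $\Gr(n,\fd)$ containing the whole fiber, and then produce explicit curves through distinct points to derive the contradictions — but since all of this is already carried out in Propositions \ref{1} and \ref{2}, the proof of Theorem \ref{lastthm} itself is immediate.

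\begin{proof}
This is immediate by combining Proposition \ref{1} and Proposition \ref{2}.  Indeed, if $\fa$ is an abelian ideal of some Borel subalgebra of $\fg$ and $f = 0$, then $\Ad_D \fl(\fa, f) = \Ad_D \fl(\fa, 0)$ is closed by Proposition \ref{1}.  Conversely, if $\Ad_D \fl(\fa, f)$ is closed, then by Proposition \ref{2} we have that $\fa$ is an abelian ideal of some Borel subalgebra of $\fg$ and $f = 0$.
\end{proof}
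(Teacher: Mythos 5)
Your proposal is correct and matches the paper exactly: the paper states Theorem \ref{lastthm} as an immediate consequence of Propositions \ref{1} and \ref{2}, with no further argument. Your two-sentence proof citing each proposition for its respective direction is precisely what is needed.
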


Fix a Borel subalgebra $\fb$ of $\fg$.  

\begin{theorem}
The assignment $F: \fa \mapsto \Ad_D \fl(\fa, 0)$ provides a bijection between the set of abelian ideals of $\fb$ and the set of closed $D$-orbits in $\mathcal L$.  In particular, the number of closed $D$-orbits in $\mathcal L$ is exactly $2^{\rk \fg}$.
\end{theorem}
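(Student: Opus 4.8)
The plan is to bootstrap off Theorem \ref{lastthm}, which already says \emph{which} Lagrangian subalgebras have closed $D$-orbit, and to add the bookkeeping needed to see that, after fixing the Borel $\fb$, the map $F$ becomes a bijection. First I would record a description of the orbits in the image of $F$. If $\fa$ is an abelian ideal of $\fb$, then the computation inside the proof of Proposition \ref{1} shows that $\fgd$ acts trivially on $\fl(\fa,0)$ (because $f_\alpha = 0$ when $\fa$ is abelian), so by Lemma \ref{halfact} the $D$-orbit coincides with the $G$-orbit,
\[
\Ad_D \fl(\fa, 0) = \{\fl(\Ad_g \fa, 0) : g \in G\}.
\]
Well-definedness of $F$ — that each $\Ad_D \fl(\fa,0)$ really is a closed $D$-orbit — is exactly Proposition \ref{1}.

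For surjectivity I would take an arbitrary closed $D$-orbit $\mathcal O$. By the Karolinsky--Stolin theorem it has the form $\Ad_D \fl(\fa', f)$, and since it is closed, Theorem \ref{lastthm} forces $f = 0$ and forces $\fa'$ to be an abelian ideal of \emph{some} Borel $\fb'$ of $\fg$. As all Borel subalgebras of $\fg$ are conjugate, I can choose $g \in G$ with $\Ad_g \fb' = \fb$; then $\fa := \Ad_g \fa'$ is an abelian ideal of $\fb$, and by Lemma \ref{halfact} the point $\fl(\fa, 0) = \Ad_g \fl(\fa', 0)$ lies in $\mathcal O$, so $F(\fa) = \Ad_D \fl(\fa,0) = \mathcal O$.

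For injectivity, suppose $\fa, \fa'$ are abelian ideals of $\fb$ with $F(\fa) = F(\fa')$. Using the orbit description above together with the fact (noted after the Karolinsky--Stolin theorem) that $\fl(\cdot,\cdot)$ is determined by its two arguments, this gives $\fa' = \Ad_g \fa$ for some $g \in G$, so injectivity reduces to the purely Lie-theoretic claim that two $G$-conjugate ideals of one fixed Borel $\fb$ must be equal. I would prove this as follows. Let $B$ be the Borel subgroup of $G$ with Lie algebra $\fb$. Then $B$ normalizes $\fa$ and also normalizes $\fa' = \Ad_g\fa$, so $g^{-1}Bg$ normalizes $\Ad_{g^{-1}}\fa' = \fa$; hence both $B$ and $g^{-1}Bg$ lie in the parabolic $P := N_G(\fa)$. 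Since Borel subgroups of $P$ are $P$-conjugate, $g^{-1}Bg = p^{-1}Bp$ for some $p \in P$, whence $pg^{-1} \in N_G(B) = B$. Writing $g = b^{-1}p$ with $b := pg^{-1} \in B$, we get $\fa' = \Ad_g\fa = \Ad_{b^{-1}}\Ad_p\fa = \fa$, using $p \in N_G(\fa)$ and that $B$ normalizes $\fa$. Thus $\fa = \fa'$.

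Granting the bijection, the numerical statement is immediate: by Peterson's theorem the number of abelian ideals of $\fb$ equals $2^{\rk \fg}$, hence so does the number of closed $D$-orbits in $\mathcal L$. The only step that is not formal bookkeeping on top of Theorem \ref{lastthm}, Proposition \ref{1}, and Lemma \ref{halfact} is the injectivity — specifically the sub-claim that $G$-conjugate ideals of a fixed Borel coincide — and I expect the normalizer-of-a-Borel argument above to be the crux, even though it is short.
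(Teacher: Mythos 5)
Your proposal is correct and follows essentially the same route as the paper: surjectivity via conjugacy of Borel subalgebras together with Lemma \ref{halfact}, injectivity by reducing to the claim that two $G$-conjugate abelian ideals of the fixed Borel $\fb$ must coincide, and the count from Peterson's theorem. The only (minor) difference is in that last claim: the paper conjugates the two parabolics $N_G(\fa_1)$ and $N_G(\fa_2)$ and invokes that a parabolic containing $B$ is unique in its conjugacy class and is self-normalizing, whereas you conjugate the two Borel subgroups $B$ and $g^{-1}Bg$ inside the single parabolic $N_G(\fa)$ and use $N_G(B)=B$ --- equivalent standard facts, so both arguments go through.
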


\begin{proof}
Let $\fa'$ be an abelian ideal of some Borel subalgebra $\fb'$ of $\fg$.  Since all Borel subalgebras of $\fg$ are conjugate, there exists $g \in G$ such that $\Ad_g \fb' = \fb$.  Then $\fa := \Ad_g \fa'$ is an abelian ideal of $\fb$.  It follows from Lemma \ref{halfact} that $$\Ad_g \fl(\fa', 0) = \fl(\Ad_g\fa', 0) = \fl(\fa, 0).$$  Hence, we have $$\Ad_D \fl(\fa', 0) = \Ad_D \fl(\fa, 0) = F(\fa).$$  This, together with Theorem \ref{lastthm}, proves that $F$ is surjective.

Let $\fa_1, \fa_2$ be abelian ideals of $\fb$.  Assume that $F(\fa_1) = F(\fa_2)$.  Again by Lemma \ref{halfact}, we can find $g \in G$ such that $\Ad_g \fl(\fa_1, 0) = \fl(\fa_2, 0)$, i.e., $\fl(\Ad_g \fa_1, 0) = \fl(\fa_2, 0)$.  It follows that $\Ad_g \fa_1 = \fa_2$.  Define $P_i$ to be the normalizer of $\fa_i$ in $G$, for $i = 1,2$.  It is easy to see that $gP_1g^{-1} = P_2$.  For $i = 1,2$, since $\fa_i$ is a Lie ideal of $\fb$, we see that $P_i$ contains the Borel subgroup $B$ corresponding to $\fb$.  But any parabolic subgroup of $G$ is conjugate to a unique parabolic subgroup containing $B$, hence we have $P_1 = P_2$.  Since the normalizer of a parabolic subgroup is itself, we deduce from $gP_1g^{-1} = P_2 = P_1$ that $g \in P_1$.  It follows that $\fa_1 = \Ad_g \fa_1 = \fa_2$.  This proves that $F$ is injective.

By Peterson's theorem on abelian ideals, the cardinality of the set of abelian ideals of $\fb$ is $2^{\rk \fg}$ (see \cite{CP0, CP, CP2, Kos2, Pan0, PR, Sut}).  Now the second statement follows easily.
\end{proof}

\bigskip


\noindent Sam Evens: Department of Mathematics, University of Notre Dame, 255 Hurley, Notre Dame, IN 46556.  Email: sevens@nd.edu

\medskip

\noindent Yu Li: Department of Mathematics, University of Chicago, 5734 S. University Ave., Chicago, IL 60637.  Email: liyu@math.uchicago.edu

\end{document}